\newtheorem{theorem}{Theorem}
\newtheorem{lemma}[theorem]{Lemma}
\newtheorem{remark}[theorem]{Remark}
\newtheorem{example}[theorem]{Example}
\newtheorem{result}[theorem]{Result}
\newtheorem{corollary}[theorem]{Corollary}
\def\PG{\mathrm{PG}}
\def\F{\mathbb{F}_q}
\def\S{\mathcal{S}}
\def\C{\mathcal{C}}
\def\Ha{\mathcal{H_\mathcal{S}}}
\title{Partial covers of $\PG(n,q)$}
\author{S. Dodunekov \and L. Storme \and G. Van de Voorde \thanks{This author is supported by the Fund for Scientific Research Ð Flanders (FWO - Vlaanderen).} }
\date{}
\begin{document}

\maketitle
\begin{abstract} In this paper, we show that a set of $q+a$ hyperplanes, $q>13$, $a\leq (q-10)/4$, that does not cover $\PG(n,q)$, does not cover at least $q^{n-1}-aq^{n-2}$ points, and show that this lower bound is sharp. If the number of non-covered points is at most $q^{n-1}$, then we show that all non-covered points are contained in one hyperplane. 
Finally, using a recent result of Blokhuis, Brouwer, and Sz\H{o}nyi \cite{BBS}, we remark that the bound on $a$ for which these results are valid can be improved to $a<(q-2)/3$ and that this upper bound on $a$ is sharp.
\end{abstract}

\section{Introduction}
Let $\PG(n,q)$ denote the $n$-dimensional projective space
over the finite field  $\F$ with $q$ elements, where $q=p^h$,  $p$ prime, $h\geq 1$. We denote the number of points in $\PG(n,q)$ by $\theta_n$, i.e., $\theta_n=\frac{q^{n+1}-1}{q-1}$.

Let $\C$ be a family of $q+a$ hyperplanes of $\PG(n,q)$. Denote by $\C(P)$ the set of hyperplanes of $\C$ containing $P$. A $(q+a)$-cover $\C$ of $\PG(n,q)$ is a family $\C$ of $q+a$ different hyperplanes in $\PG(n,q)$ such that $\vert\C(P)\vert\geq 1, \forall P\in\PG(n,q)$. A {\em partial} $(q+a)$-cover $\S$ is a set of $q+a$ hyperplanes such that there is at least one point $Q$ in $\PG(n,q)$ such that $\vert \mathcal{S}(Q)\vert =0$. A point $H$ for which $\vert\S(H)\vert=0$, is called a {\em hole of $\S$}. We denote the set of holes of $\S$ by $\Ha$.

A {\em blocking set} of $\PG(n,q)$ is a
set $B$ of points such that each hyperplane of $\PG(n,q)$ contains at least one point
of $B$. A blocking set $B$ is called {\em trivial}
 if it contains a line of $\PG(n,q)$. If a hyperplane contains
exactly one point of a blocking set $B$ in $\PG(n,q)$, it is called a {\em tangent hyperplane} to $B$, and a point $P$ of
$B$ is called {\em essential} when it belongs to a tangent hyperplane to $B$.
A blocking set $B$ is called {\em minimal} when no proper subset of $B$
is also a blocking set, i.e., when each point of $B$ is essential. 
\\

It is clear that a cover of $\PG(n,q)$ is a dual blocking set. Dualizing the above definitions yields that a cover $\C$ is called {\em trivial} if it contains all hyperplanes through a certain $(n-2)$-space and {\em minimal} if no proper subset of $\C$ is a cover. A hyperplane $\pi$ is {\em essential} to a cover $\C$ if there is a point $P\in\pi$ such that $\C(P)=\lbrace \pi\rbrace.$\\

The following reducibility results will be used throughout this article.
\begin{result} \cite[Remark 3.3]{Sz}\label{unique}  A blocking set of size at most $2q$ in $\PG(2,q)$ is uniquely reducible to a minimal blocking set. \end{result}
\begin{result}\cite[Corollary 1]{LSV2} \label{unique2} A blocking set of size smaller than $2q$ in $\PG(n,q)$ is uniquely reducible to a minimal blocking set. \end{result}

In Theorem \ref{hulp2}, we extend the following result of Blokhuis and Brouwer to general dimension.
\begin{result}\cite{BB} \label{hulp} Let $B$ be a blocking set in $\PG(2,q)$. If $\vert B \vert=2q-s$, then there are at least $s+1$ tangent lines through each essential point of $B$.
\end{result}
Finally, for $q$ a prime, we use the following result, proven by Blokhuis \cite{B} for $n=2$.
\begin{result}  \cite{bruen}\label{alg} \label{blokhuis} Let $B$ be a non-trivial blocking set in $\PG(n,p)$, where $p$ is an odd prime. Then
$$\vert B\vert \geq 3(p+1)/2.$$
\end{result}

\section{Partial covers of $\PG(2,q)$}\label{s1}

Throughout this section, $\S$ will denote a partial $(q+a)$-cover of $\PG(2,q)$, with $0\leq a\leq(q-10)/4$, $q>13$.

\begin{theorem}\label{st1}
If $\vert \Ha\vert \leq q+a$, then $\vert \Ha \vert \leq q$, and the holes are collinear.
\end{theorem}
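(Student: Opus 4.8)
The plan is to \emph{dualize}. Writing $B=\S^{*}$ for the set of $q+a$ points of the dual plane corresponding to the lines of $\S$, the condition that $\S$ is a \emph{partial} cover becomes that $B$ is \emph{not} a blocking set, and a hole of $\S$ corresponds exactly to a line of the dual plane disjoint from $B$, i.e.\ to an \emph{external line} of $B$. Thus $|\Ha|$ equals the number $e$ of external lines of $B$, and ``the holes are collinear'' dualizes to ``the external lines of $B$ are concurrent'', which is equivalent to the existence of a point $V$ with $B\cup\{V\}$ a blocking set. So the theorem becomes: if $B$ is a non-blocking set of $q+a$ points with $e\le q+a$ external lines, then $e\le q$ and the external lines pass through a common point. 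Two elementary remarks are recorded first: since $B$ is not blocking it contains no full line, so every line meets $B$ in at most $q$ points; and two lines meeting $B$ in $q$ points would force at least $2q-1>q+a$ points, so at most one line is that full.

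The engine of the argument is to locate a line $\ell$ meeting $B$ in \emph{exactly} $q$ points. Let $\ell$ be a line with the maximum number $m=|\ell\cap B|$ of points of $B$, and call the $q+1-m$ points of $\ell\setminus B$ its \emph{gaps}. Every external line meets $\ell$ in a single point, which cannot lie in $B$ and is therefore a gap; moreover, through a fixed gap $G$ the $q$ lines other than $\ell$ can be blocked by at most the $q+a-m$ points of $B\setminus\ell$, so at least $m-a$ of them avoid $B$ entirely and are external. As each external line is assigned to a unique gap, this gives the projection bound
\begin{equation*}
e\ \ge\ (q+1-m)(m-a).
\end{equation*}
Feeding in $e\le q+a$ together with $q>13$, $a\le(q-10)/4$, the right-hand side exceeds $q+a$ for every $m$ in a long central interval; hence $m$ is pinned to one of the two extremes, namely $m=q$ (the desired near-pencil) or $m$ very small (a set $B$ all of whose lines are short).

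The main obstacle is to rule out the second, ``spread out'', regime, and this is exactly where the quoted results are needed. One complements the projection bound with the standard moment identities
\begin{equation*}
\textstyle\sum_{j\ge0}t_j=\theta_2,\qquad \sum_{j\ge1}jt_j=(q+a)(q+1),\qquad \sum_{j\ge2}\binom{j}{2}t_j=\binom{q+a}{2},
\end{equation*}
where $t_j$ counts the $j$-secants of $B$ and $t_0=e$; these show that a set all of whose lines are short has far more than $q+a$ external lines, and the crude form of the estimate already disposes of small $a$. For the full range $a\le(q-10)/4$ the intermediate values of $m$ survive the elementary counts, and to eliminate them I would pass to a blocking set: adjoin to $B$ a minimal set of points meeting the remaining external lines, reduce it to a minimal blocking set via Results \ref{unique} and \ref{unique2}, and apply the Blokhuis--Brouwer tangent bound (Result \ref{hulp}), which forces many tangents through each essential point and hence a long line in $B$, contradicting the assumption that $m$ is intermediate. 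Carrying this reduction out with explicit control of the number of added points, so that the resulting blocking set stays below $2q$ and Result \ref{hulp} applies, is the technical heart of the proof, and it is where the precise constant $(q-10)/4$ and the bound $q>13$ are consumed.

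Once $m=q$ is established the conclusion is immediate. Then $\ell\cap B=\ell\setminus\{X\}$ for a single gap $X$, and the remaining $a$ points of $B$ lie off $\ell$; by the projection analysis every external line passes through $X$, so the external lines are concurrent, and their number is $q$ minus the number of lines through $X$ that meet those $a$ points, whence $e\le q$. Dualizing back, the holes all lie on one line and $|\Ha|=e\le q$, as required; the extremal case $a=0$, where $B$ is a line minus a point and $e=q$, shows that the bound is sharp.
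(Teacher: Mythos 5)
Your dualization, the projection bound $e\ge(q+1-m)(m-a)$, and the endgame when $m=q$ are all correct, but there is a fatal structural gap in the plan for the remaining regime $m\le a+1$, which you propose to \emph{rule out} by contradiction. That regime cannot be ruled out, because the hypotheses of the theorem are compatible with it. Take $q$ a square, $q\ge 49$, and $a=\sqrt{q}$ (so $a\le (q-10)/4$), and let $B$ be a Baer subplane minus one point $P$. Then $\vert B\vert =q+\sqrt{q}=q+a$, every line meets $B$ in at most $\sqrt{q}+1=a+1$ points, and the external lines of $B$ are exactly the $q-\sqrt{q}\le q+a$ tangents to the subplane at $P$, all concurrent at $P$. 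This configuration satisfies the hypotheses (and the conclusion) of the theorem while sitting squarely inside your ``all lines short'' case, so no contradiction is available there, and both of your proposed tools provably fail on it: the claim that the moment identities show a set with only short lines has far more than $q+a$ external lines is false (here there are only $q-\sqrt{q}$; quantitatively, the bound $t_0\ge 1-a(q+1)+(q+a)(q+a-1)/(a+1)$ that the identities yield becomes negative once $a$ is of order $q$, so it only disposes of $a=O(\sqrt{q})$); and the step ``many tangents through each essential point, hence a long line in $B$'' is a non sequitur --- the Baer subplane itself is a minimal blocking set with many tangents through every point and no line meeting it in more than $\sqrt{q}+1$ points. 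The underlying mistake is assuming that ``holes collinear'' must come from a near-pencil ($q$-secant) in the dual set; that is true for $q$ prime (Corollary \ref{priemg}), but not for general $q$.

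In the short-line regime you must therefore still \emph{prove} concurrency of the external lines rather than derive a contradiction, and this is what the paper's proof does, by a different mechanism that never produces a $q$-secant. It covers the hole set by at most $(\vert\Ha\vert+1)/2$ lines and uses unique reducibility of covers of size at most $2q$ (Result \ref{unique}) with a swapping argument (replace a line carrying $y$ holes by $y$ lines, one through each of its holes) to show that every line in a minimal cover of the holes carries at least $(q-3a-1)/2$ holes; then, if there are $z\ge 2$ such lines, a pigeonhole-plus-swap argument eliminates $2\le z\le 8$ and a direct count of holes eliminates $z\ge 9$, leaving $z=1$, i.e.\ collinearity. Your adjoin-points idea can in fact recover the first of these steps (a tangent to $B\cup A$ at an adjoined essential point of $A$ is an external line of $B$, so Result \ref{hulp} gives at least $(q-3a+1)/2$ external lines through each point of $A$, distinct for distinct points), but when $\vert A\vert=1$ this \emph{is} the desired conclusion, not a contradiction, and when $\vert A\vert\ge 2$ the case analysis the paper performs is still needed; as written, your outline has no route through its central case.
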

\begin{proof} Let $\vert \Ha \vert=x$. Suppose that there are three non-collinear points in $\Ha$, otherwise the theorem is proven. The set $\Ha$ can be covered by at most $(x+1)/2$ lines, denote the set of these lines by $\mathcal{L}$.  Let $\mathcal{L}'$ be the minimal cover of $\Ha$ contained in $\mathcal{L}$. The set $\S\cup \mathcal{L'}$ is a cover in $\PG(2,q)$. Since the size of $\S\cup\mathcal{L'}$ is at most $q+a+(q+a+1)/2\leq 2q$, there is a unique minimal cover $\C$ contained in $\S\cup\mathcal{L'}$ (Result \ref{unique}). 

Let $\ell_y\in \mathcal{L'}$ be a $y$-secant to $\Ha$ with $y\leq (q-3a-1)/2$. Interchanging $\ell_y$ by $y$ other lines gives, together with the lines of $\S$, another cover $\C'$, with $\vert \C \cup \C'\vert\leq q+a+(q+a+1)/2+(q-3a-1)/2\leq 2q$. Hence, by the unique reducibility property, there is a unique minimal cover contained in $\C\cup \C'$, hence in $\C\cap \C'$. This minimal cover does not contain $\ell_y$, hence $\mathcal{L}'$ contains only lines with at least $(q-3a-1)/2$ holes.

If $\vert \mathcal{L'}\vert=1$, then the theorem is proven. Remark that if there is only one long secant and there are $q$ holes, then $a=0$.

 Suppose that $\vert\mathcal{L'}\vert=z$. These $z$ secants, together with the $q+a$ lines of $\S$, form a cover $\C''$. Then there is a line $L$ in $\mathcal{L}'$ with less than $(q+a+1+ {z \choose 2})/z$ holes. Suppose to the contrary that any line in $\mathcal{L}'$ contains at least $(q+a+1+ {z \choose 2})/z$ holes, then there are at least $z(q+a+1+ {z \choose 2})/z-{z\choose 2}=q+a+1>q+a$ holes, a contradiction. 
We construct a new cover by replacing this line $L$ with less than $(q+a+1+ {z \choose 2})/z$ lines, one through each hole on $L$. In total, with the $z$ secants and the lines of $\S$, this set of lines constitutes a cover $\C'''$ of size at most $q+a+z+(q+a+1+ {z \choose 2})/z$. If 
\begin{eqnarray}
q+a+z+(q+a+1+ {z \choose 2})/z\leq 2q,\label{vgl}
\end{eqnarray}
 the unique reducibility property (Result \ref{unique}) shows that there is a minimal cover contained in $\C''\cap \C'''$, which does not contain the line $L$. This implies that the line $L$ was not essential to the cover $\C''$, a contradiction. It is easy to check that for $z\geq 2$ and $z<9$, inequality (\ref{vgl})  holds for $a\leq(q-10)/4$ and $q>13$. Hence, there are at least $9$ long secants essential to the minimal cover $\C''$. On each of these secants, there are at least $(q-3a-1)/2$ holes, hence we have at least $9(q-3a-1)/2-9\cdot8/2$ holes. But
 $$9(q-3a-1)/2-36>q+a$$ if $a<(7q-63)/25$. Since $a\leq (q-10)/4$, and $(q-10)/4<(7q-63)/25$, the theorem follows.\end{proof}

 \begin{corollary}\label{priemg} Let $q$ be a prime. If $\vert \Ha\vert \leq  q+a$, then $\S$ consists of $q$ lines through the same point $R$ and $a$ lines $l_1,\ldots, l_a$, not through $R$.
 \end{corollary}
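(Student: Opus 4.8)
The plan is to complete $\S$ to a genuine cover and then invoke the prime-specific lower bound of Result~\ref{blokhuis} in the dual plane. First I would apply Theorem~\ref{st1}: since $|\Ha|\le q+a$, the holes are collinear, so fix a line $m$ containing every hole of $\S$. Because $\S$ has at least one hole and every point of a line of $\S$ is covered, $m\notin\S$; therefore $\S\cup\{m\}$ covers all of $\PG(2,q)$ and is a cover of size exactly $q+a+1$.

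Next I would exploit that $q=p$ is prime. Because $a\le(q-10)/4$,
\[
|\S\cup\{m\}| = q+a+1 \le p+\frac{p-10}{4}+1 = \frac{5p-6}{4} < \frac{3(p+1)}{2}.
\]
As $q+a+1\le 2q$, this cover reduces uniquely to a minimal cover $\C\subseteq\S\cup\{m\}$ with $|\C|<3(p+1)/2$ (dual of Result~\ref{unique}). Were $\C$ non-trivial, the dual of Result~\ref{blokhuis} would force $|\C|\ge 3(p+1)/2$, a contradiction; hence $\C$ is a trivial cover and consists of all $q+1$ lines through a single point $R$. Consequently $\S\cup\{m\}$ contains the entire pencil of lines through $R$, and since $|\S\cup\{m\}|=q+a+1$, exactly $a$ of its lines avoid $R$.

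The remaining step, which I expect to carry the real content, is to locate $m$ and thereby read off the structure of $\S$; this is precisely where the partial-cover hypothesis must be used. If $m$ did not pass through $R$, then all $q+1$ lines through $R$ would already belong to $\S$, so $\S$ itself would be a cover, contradicting the existence of a hole. Hence $m$ lies in the pencil through $R$, and deleting $m$ from $\S\cup\{m\}$ removes precisely one line of that pencil. Thus $\S$ consists of the remaining $q$ lines through $R$ together with the $a$ lines $l_1,\dots,l_a$ that miss $R$, which is the claimed structure.
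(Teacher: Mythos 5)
Your proof is correct and follows essentially the same route as the paper: complete $\S$ with the line of holes to a cover of size $q+a+1<3(q+1)/2\leq 2q$, apply Result~\ref{blokhuis} together with the unique reducibility of Result~\ref{unique} to conclude the minimal cover is the pencil through a point $R$, and then deduce the structure of $\S$. The only difference is that you explicitly justify the step the paper dismisses as ``clear'' (that $m$ must belong to the pencil, since otherwise $\S$ would contain the whole pencil and have no holes), which is a welcome addition but not a different argument.
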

 \begin{proof}
 It follows from Theorem \ref{st1} that the holes are contained in one line, say $M$. Then the lines of $\S$, together with $M$, constitute a cover $\C$ of size $q+a+1<3(q+1)/2$. Result \ref{blokhuis}, together with Result \ref{unique}, shows that the unique minimal cover contained in $\C$ is the set of all lines through a point $R$. It is clear that the line $M$ is one of the lines through $R$. The other $a$ lines are random, but do not contain $R$.
 \end{proof}

\section{Partial covers of $\PG(n,q)$}

Before extending the results of Section \ref{s1} to general dimension, we need the extension of Result \ref{hulp}.

\begin{theorem}\label{hulp2} The number of tangent hyperplanes through an essential point of a blocking set $B$ of size $q+a+1$, $\vert B\vert \leq 2q$, in $\PG(n,q)$ is at least $q^{n-1}-aq^{n-2}$.
\end{theorem}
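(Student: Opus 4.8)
The plan is to reduce to the planar Blokhuis--Brouwer result (Result \ref{hulp}) by projecting modulo a suitable $(n-3)$-space, and then to assemble the global count from the resulting pencils. Let $P$ be an essential point of $B$ and fix a tangent hyperplane $\pi_0$, so that $\pi_0\cap B=\{P\}$. Every tangent hyperplane $\pi\neq\pi_0$ through $P$ meets $\pi_0$ in an $(n-2)$-space $\pi\cap\pi_0$ through $P$, so I would organise the tangent hyperplanes through $P$ according to these $(n-2)$-spaces $M$ (with $P\in M\subseteq\pi_0$), each such $M$ determining the pencil of the $q+1$ hyperplanes containing it. Note that the target bound reads $q-a$ when $n=2$, which is exactly the $s+1=q-a$ tangent lines that Result \ref{hulp} guarantees; the aim is to lift this to every pencil.

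Concretely, given an $(n-3)$-space $U\subseteq\pi_0$ with $P\notin U$, I would pass to the quotient $\PG(n,q)/U\cong\PG(2,q)$. Because $\pi_0\cap B=\{P\}$ and $P\notin U$, the subspace $U$ is automatically disjoint from $B$, so the image $\overline{B}$ of $B$ is a blocking set of $\PG(2,q)$ of size at most $q+a+1\leq 2q$; moreover the image of $\pi_0$ is a tangent line at the image $\overline{P}$ of $P$, so $\overline{P}$ is essential. Setting $M=\langle U,P\rangle$, and using $M\cap B=\{P\}$, a hyperplane $\Gamma\supseteq M$ is tangent to $B$ if and only if its image is a tangent line to $\overline{B}$ at $\overline{P}$. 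Applying Result \ref{hulp} to $\overline{B}$ then shows that the pencil through $M$ contains at least $q-a$ tangent hyperplanes (including $\pi_0$ itself).

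To finish I would sum this lower bound over all $(n-2)$-spaces $M$ with $P\in M\subseteq\pi_0$; there are $\theta_{n-2}$ of them, being the hyperplanes of $\pi_0\cong\PG(n-1,q)$ through $P$. Each tangent hyperplane $\pi\neq\pi_0$ through $P$ is counted exactly once, for the unique $M=\pi\cap\pi_0$, whereas $\pi_0$ lies in every pencil. Writing $t$ for the number of tangent hyperplanes through $P$, the sum of the pencil counts equals $(t-1)+\theta_{n-2}$, and the per-pencil bound gives $(t-1)+\theta_{n-2}\geq\theta_{n-2}(q-a)$, that is $t\geq 1+\theta_{n-2}(q-a-1)$.

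I expect the main obstacle to be the sharpness of the constant. The uniform estimate $q-a$ per pencil only yields $t\geq q^{n-1}-aq^{n-2}-a\theta_{n-3}$, which falls short of the target by a lower-order term; the extremal configuration, a line together with $a$ points in general position, already realises this slack, so it is genuinely present in the argument as stated. Closing the gap requires the finer bound $q-a_U$ from Result \ref{hulp}, where $|\overline{B}|=q+a_U+1$ drops below $q+a+1$ precisely on the pencils whose projection identifies points of $B$, and then showing that these savings aggregate to exactly $a\theta_{n-3}$. Equivalently, this reduces to a counting inequality stating that the secant lines of $B$ not through $P$ carry total excess $\sum_{L}(|L\cap B|-1)\geq aq$, a point where the blocking property of $B$ (and presumably the hypotheses $a\leq(q-10)/4$, $q>13$) must be exploited; this is the step I anticipate will demand the most care.
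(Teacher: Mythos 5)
Your reduction to Result \ref{hulp} by passing to the quotient $\PG(n,q)/U\cong\PG(2,q)$ is sound as far as it goes: since $M\cap B=\{P\}$, the tangent hyperplanes through $M=\langle U,P\rangle$ do correspond exactly to the tangent lines through $\overline{P}$ in the quotient plane, the double count $(t-1)+\theta_{n-2}=\sum_M\tau_M$ is correct, and you therefore obtain $t\geq q^{n-1}-a\theta_{n-2}=q^{n-1}-aq^{n-2}-a\theta_{n-3}$. But that is strictly weaker than the theorem, and the step you defer --- recovering the deficit $a\theta_{n-3}$ from pencils where the projection collapses points of $B$ --- is the actual mathematical content, not a routine verification. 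What is needed is not merely the total secant excess $\sum_L(|L\cap B|-1)\geq aq$, but control of how that excess distributes over the $\theta_{n-2}$ pencils: for each $(n-2)$-space $M$ you must exhibit a specific $U\subseteq M$ with $P\notin U$ whose collision number is large, and these per-pencil gains must sum to at least $a\theta_{n-3}$. That this is delicate is shown by the case where $B$ lies in a plane $\Sigma$ (the typical shape of a small blocking set in higher dimensions): every secant of $B$ meets $\pi_0$ on the single line $m=\Sigma\cap\pi_0$, so only the $\theta_{n-3}$ pencils containing $m$ can contribute any savings, and for those one computes that every admissible $U$ gives exactly $a$ collisions --- the required inequality holds with equality and there is no slack anywhere. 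Note also that Theorem \ref{hulp2} assumes only $|B|\leq 2q$; the hypotheses $a\leq(q-10)/4$, $q>13$ that you hope to invoke in the missing step are not available here.

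The paper avoids this aggregation problem entirely by applying Result \ref{hulp} \emph{once, over the large field}, instead of $\theta_{n-2}$ times over $\F$: it embeds $\PG(n,q)$ in $\PG(2n-2,q)$, forms the cone $\langle B,V\rangle$ over $B$ with an $(n-3)$-dimensional vertex $V$ skew to $\PG(n,q)$, and uses a regular $(n-2)$-spread and the Andr\'e--Bruck--Bose construction to turn this cone into a blocking set $\bar{B}$ of size $q^{n-1}+aq^{n-2}+1$ in $\PG(2,q^{n-1})$. There Result \ref{hulp} immediately gives at least $q^{n-1}-aq^{n-2}$ tangent lines through the relevant essential point, and the rest of the proof checks that distinct tangent lines project to distinct tangent hyperplanes of $B$ in $\PG(n,q)$. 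In other words, the sharpness your argument loses by summing $\theta_{n-2}$ small-field estimates is preserved by a single application of Blokhuis--Brouwer over $\mathbb{F}_{q^{n-1}}$; that field-reduction idea is precisely what your proposal is missing, and without it (or a genuine proof of your aggregation inequality) the proof is incomplete.
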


\begin{proof} The arguments of this proof are based on the proof of Proposition 2.5 in \cite{SzW}.

 For $n=2$, Result \ref{hulp} proves this theorem. Assume by induction that the theorem holds for all dimensions $i\leq n-1$. Let $B$ be a blocking set in $\pi=\PG(n,q)$. Since $\vert B \vert\leq 2q$, there is an $(n-2)$-space $L$ in $\pi$ that is skew to $B$. Let $H$ be a hyperplane through $L$. Embed $\pi$ in $\PG(2n-2,q)$.  Let $P$ be a $\PG(n-3,q)$, skew to $\pi$, in $\PG(2n-2,q)$. Then $\langle B,P\rangle$, the cone with vertex $P$ and base $B$, is a blocking set with respect to the $(n-1)$-spaces of $\PG(2n-2,q)$. Let $H^{\ast}\neq H$ be a hyperplane through $L$ only sharing one point $Q$ with $B$. Since $\vert B \vert$ is at most $2q$, there are at least $2$ tangent hyperplanes through $L$, hence $H^{\ast}$ can be chosen different from $H$.

 Let $\mathcal{S}$ be a regular $(n-2)$-spread through $L$ and $\langle Q,P\rangle$ in $W$, the $(2n-3)$-dimensional space spanned by $L$ and $\langle Q,P\rangle$. Using the Andr\'e-Bruck-Bose construction (see \cite{Br}), this yields a projective plane $\PG(2,q^{n-1})=\Pi^{W}$. The arguments of \cite[Proposition 2.5]{SzW} show that $H$ defines a line $\ell$ in $\Pi^{W}$, only having essential points of the blocking set $\bar{B}$ of size $1+(q+a)q^{n-2}=q^{n-1}+aq^{n-2}+1$, where $\bar{B}$ is the blocking set in $\PG(2,q^{n-1})$, corresponding to $\langle B,P\rangle$. This number of points comes from $\langle Q,P\rangle$ at infinity, which is one point of the blocking set, and the $q+a$ affine points $R_i$ of $B$, all on a cone $\langle R_i,P\rangle$ with $q^{n-2}$ affine points.
Result \ref{hulp} shows that any essential point  of $\bar{B}$ lies on at least $q^{n-1}-aq^{n-2}$ tangent lines to the blocking set $\bar{B}$ in $\Pi^{W}$. We will show that the number of tangent lines through an essential point of the blocking set $\bar{B}$ in $\Pi^{W}$ is a lower bound on the number of tangent hyperplanes through an essential point of $B$ in $\PG(n,q)$.

A tangent line through an affine essential point $R$ of $\bar{B}$ corresponds to an $(n-1)$-space $\langle R,\Omega\rangle$, with $\Omega$ a spread element of $\mathcal{S}$. The space $\langle R,\Omega\rangle$ is not necessarily a tangent hyperplane to $B$ in $\PG(n,q)$. Note that $\Omega\neq \langle Q,P\rangle$, since both are spread elements and cannot coincide since $\langle Q,P\rangle$ is an element of the blocking set, hence $\langle R,Q,P\rangle$ cannot be a tangent space.

The projection of $\langle R,\Omega\rangle$ from $P$ onto $\PG(n,q)$ is an $(n-1)$-dimensional space through $R$ in $\PG(n,q)$ which is skew to $Q$ since $\Omega\cap \langle Q,P \rangle =\emptyset$, and which only has $R$ in common with $B$ since $\langle\Omega,R\rangle\cap\langle B,P\rangle=\lbrace R \rbrace$. Hence, this projection is a tangent $(n-1)$-space through $R$ to $B$ in $\PG(n,q)$. So we have shown that any tangent line in $R$ to $\bar{B}$ gives rise to a tangent hyperplane to $B$ in $R$. If any tangent line to $\bar{B}$ in $R$ gives rise to a different tangent hyperplane to $B$, the theorem is proven.

Let $\eta$ be a tangent hyperplane to $B$ in $R$ which is the projection of two tangent lines $\langle \Omega,R\rangle$ and $\langle \Omega', R\rangle$. The dimension of $\langle \eta,P\rangle$ is $2n-3$, and $\dim(\langle \eta,P\rangle\cap W)=2n-4$.
A hyperplane of $\PG(2n-3,q)$ contains exactly one element of a regular $(n-2)$-spread. Since it contains $\Omega$ and $\Omega'$, $\Omega=\Omega'$. So $\eta$ is the projection of at most one such $(n-1)$-space.

For every essential point $Q$ of $B$, it is possible to select a tangent hyperplane $H$ through $Q$, and to let this tangent hyperplane $H$ play the role described in the preceding paragraph. Since $Q$ is an affine essential point, this implies that $Q$ lies in at least $q^{n-1}-aq^{n-2}$ tangent hyperplanes to $B$.
\end{proof}

\begin{lemma} \label{gev}
Let $\S$ be a partial $(q+a)$-cover of $\PG(n,q)$, $a<q$. If all holes of $\S$ are contained in a hyperplane $\pi$ of $\PG(n,q)$, then $|\Ha|\geq q^{n-1}-aq^{n-2}$.
\end{lemma}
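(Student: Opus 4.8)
The plan is to reduce the statement to the dual blocking-set estimate already established in Theorem~\ref{hulp2}. First I would observe that, since every hole lies in $\pi$, every point outside $\pi$ is covered by $\S$; adjoining $\pi$ therefore produces a genuine cover $\C=\S\cup\{\pi\}$ of $\PG(n,q)$. One checks that $\pi\notin\S$: otherwise $\pi$ would cover all of its own points, so $\S$ would have no hole in $\pi$, contradicting that $\S$ is a partial cover (which, by definition, has at least one hole, and that hole lies in $\pi$). Hence $\C$ consists of exactly $q+a+1$ distinct hyperplanes, and since $a<q$ its size satisfies $q+a+1\leq 2q$.

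Next I would pass to the dual space. A cover is a dual blocking set, so $\C$ dualizes to a blocking set $\C^{*}$ of size $q+a+1\leq 2q$ in $\PG(n,q)$, in which the hyperplane $\pi$ becomes a point $\pi^{*}$. The key identification is the following: a point $H$ of $\PG(n,q)$ is a hole of $\S$ precisely when $H\in\pi$ and no hyperplane of $\S$ contains $H$, i.e.\ precisely when $\C(H)=\{\pi\}$. Dualizing, this says that the hyperplane $H^{*}$ meets $\C^{*}$ in the single point $\pi^{*}$, that is, $H^{*}$ is a tangent hyperplane to $\C^{*}$ at $\pi^{*}$. The map $H\mapsto H^{*}$ is then a bijection between the holes of $\S$ and the tangent hyperplanes to $\C^{*}$ through $\pi^{*}$, so $\vert\Ha\vert$ equals the number of such tangent hyperplanes.

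Finally, since $\S$ has at least one hole, there is at least one tangent hyperplane through $\pi^{*}$, so $\pi^{*}$ is an essential point of the blocking set $\C^{*}$. Applying Theorem~\ref{hulp2} to the essential point $\pi^{*}$ of $\C^{*}$ (whose size is $q+a+1$ and at most $2q$) shows that $\pi^{*}$ lies on at least $q^{n-1}-aq^{n-2}$ tangent hyperplanes, and therefore $\vert\Ha\vert\geq q^{n-1}-aq^{n-2}$.

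This argument is essentially a dictionary translation, so there is no serious computational obstacle; the only point demanding care is the exact identification of holes with tangent hyperplanes at $\pi^{*}$. In particular one must ensure that the tangency is taken with respect to the full cover $\C$, so that $\pi$, and no member of $\S$, accounts for the single point of $\C^{*}$ met by $H^{*}$, and one should confirm that the hypothesis $a<q$ is exactly what keeps $\C^{*}$ within the size range $\vert\C^{*}\vert\leq 2q$ required by Theorem~\ref{hulp2}.
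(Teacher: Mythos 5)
Your proof is correct and follows essentially the same route as the paper: adjoin $\pi$ to form a cover of size $q+a+1$, dualize to a blocking set in which $\pi^{*}$ is an essential point, apply Theorem~\ref{hulp2}, and translate tangent hyperplanes through $\pi^{*}$ back into holes. The extra details you supply (verifying $\pi\notin\S$ and the exact bijection between holes and tangent hyperplanes at $\pi^{*}$) are correct refinements of steps the paper leaves implicit.
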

\begin{proof} The hyperplanes of $\S$, together with the hyperplane $\pi$ that contains all holes, form a cover of size $q+a+1$, in which $\pi$ is an essential hyperplane. Dualizing gives a blocking set $B$ of size $q+a+1$, where the dual of $\pi$ is an essential point. Theorem \ref{hulp2} shows that the dual of $\pi$ lies on at least $q^{n-1}-aq^{n-2}$ tangent hyperplanes to $B$. Dualizing again shows that $\pi$ contains at least $q^{n-1}-aq^{n-2}$ points that are only covered by $\pi$. Removing $\pi$ shows that there are at least $q^{n-1}-aq^{n-2}$ holes.\end{proof}

\begin{remark} The lower bound in Lemma \ref{gev} is sharp. Let $\S$ be the set of $q$ hyperplanes through a fixed $(n-2)$-space $\pi_{n-2}$. Let $H$ be the hyperplane through $\pi_{n-2}$, which is not chosen. Take $\mathrm{a}$ hyperplanes for which the $(n-2)$-dimensional intersections with $H$ are all distinct and go through a common $(n-3)$-space of $\pi_{n-2}$, then there are exactly $q^{n-1}-aq^{n-2}$ holes.
\end{remark}
From now on, $\S$ denotes a partial $(q+a)$-cover of $\PG(n,q)$, $n\geq 3$.
We denote the following property by $(A_x)$:
\begin{itemize}
\item[$(A_x)$] If $\mathcal{S}$ is a partial $(q+b)$-cover in $\PG(2,q)$, $b\leq x<(q-2)/3,$ with at most $q+b$ holes, then $q-b\leq |\Ha|\leq q$ and the holes are collinear.
\end{itemize}
Note that we have shown  in Theorem \ref{st1} and Lemma \ref{gev} that the property $(A_x)$ holds for $x\leq(q-10)/4$, $q>13$.
\begin{lemma} \label{nieuw} Assume $(A_x)$ for all $x\leq a$. If a partial cover $\mathcal{S'}$ of $\PG(2,q)$ contains 3 non-collinear holes, then $\vert \S'\vert > q+a$.
\end{lemma}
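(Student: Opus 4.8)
The plan is to argue by contradiction, reducing the situation to one in which the hypothesis $(A_x)$ can be applied. Suppose that $|\S'|=q+b\leq q+a$ and that $A,B,C$ are three non-collinear holes of $\S'$. If $\S'$ has at most $q+b$ holes, then $(A_b)$ --- which is part of the standing assumption, since $b\leq a<(q-2)/3$ --- forces all holes of $\S'$ to be collinear, contradicting the choice of $A,B,C$. Hence $\S'$ must have strictly more than $q+b$ holes, and this is the only case that requires work.

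To treat it, I would repeatedly enlarge $\S'$ by lines chosen so as to avoid $A,B,C$. Through any uncovered point $P\notin\{A,B,C\}$ there pass $q+1$ lines, at most three of which meet $\{A,B,C\}$; since $q>3$, at least one line through $P$ misses all of $A,B,C$. Adding such a line increases the number of lines by one, strictly decreases the number of holes, and leaves $A,B,C$ uncovered. Thus the quantity (number of holes) $-$ (number of lines) drops by at least two at each step, and since $A,B,C$ always remain holes the process must stop with (number of holes) $\leq$ (number of lines). We then reach a partial cover $\S''$ of some size $q+b'$ whose number of holes is at most $q+b'$ and which still has the three non-collinear holes $A,B,C$. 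Applying $(A_{b'})$ to $\S''$ forces its holes to be collinear, the desired contradiction.

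The step I expect to be the main obstacle is guaranteeing that $(A_{b'})$ is legitimately available, i.e.\ that $b'\leq a$ (equivalently $b'<(q-2)/3$). Each added line removes only boundedly many holes, so the number of lines we are forced to add --- and hence $b'$ --- is governed by how many holes $\S'$ has to begin with. Making the bookkeeping precise, one checks that as long as the number of holes is at most $q+a$, the enlargement halts with $q+b'\leq q+a$, so that $b'\leq a$ and $(A_{b'})$ applies; controlling the number of holes in this way is exactly the point on which the argument turns, since without such control the reduction could inflate the line count past $q+a$ and leave $(A_{b'})$ out of reach. Once $b'\leq a$ is secured, the collinearity forced by $(A_{b'})$ contradicts the non-collinearity of $A,B,C$, completing the proof.
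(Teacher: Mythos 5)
Your construction is essentially the paper's own proof. There, the case $\vert\S'\vert=q+a$ is settled by a direct appeal to $(A_x)$ (your first case), and the case $\vert\S'\vert=q+x'<q+a$ is reduced to it by adding $a-x'$ lines missing the three holes, all chosen through a single point $P$ off the lines $H_1H_2$, $H_1H_3$, $H_2H_3$; you instead add such lines one at a time through uncovered points, which performs the same enlargement (and is in one respect cleaner: a line through an uncovered point is automatically not already in the cover). Your bookkeeping is also correct as a conditional statement: if the number of holes is at most $q+a$, the process halts after at most $\lceil(a-b)/2\rceil$ steps, so $b'\leq a$ and $(A_{b'})$ is available.

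The gap is exactly where you located it, and it cannot be closed: in your second case the hypotheses give no upper bound whatsoever on the number of holes of $\S'$, and the phrase ``one checks that as long as the number of holes is at most $q+a$'' assumes the missing bound rather than establishing it. No argument can supply it, because the lemma as literally stated is false. Take $\S'$ to be the $q-1$ lines of a pencil through a point $P$ together with two lines not through $P$: this is a partial cover of size $q+1\leq q+a$ (any $a\geq 1$), it has at least $q-2$ holes on each of the two pencil lines missing from $\S'$ --- hence far more than $q+a$ holes --- and these holes are certainly not collinear. You should know that the paper's own proof has the identical defect: it applies $(A_x)$ to the enlarged cover $\S''$ without checking that $\S''$ has at most $q+a$ holes, which is a hypothesis of $(A_x)$. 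The statement is true, and is what the paper actually uses later (in Lemmas \ref{lem2} and \ref{lem1} it is invoked only for planes containing at most $q+a$ holes), once one adds the hypothesis that $\S'$ has at most $q+a$ holes; under that extra hypothesis both the paper's one-shot enlargement and your iterative one complete the proof. So: same approach as the paper, a correctly identified gap that the paper silently skips over, but a repair that presupposes precisely the hypothesis absent from the statement.
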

\begin{proof} If $|\S'|=q+a$, this follows immediately from property $(A_x)$. So suppose that $\vert \mathcal{S}'\vert=q+x'$, $x'<a$, and that there are 3 non-collinear holes, say $H_1,H_2,H_3$. Let $P$ be a point not on $H_1H_2$, $H_1H_3$, or $H_2H_3$. Adding $a-x'$ lines through $P$, different from $PH_1$, $PH_2$, $PH_3$, to the partial cover $\S'$ gives a cover $\S''$, with $|\S''|=q+a$. Applying property $(A_x)$ to $\S''$, the corollary follows.
\end{proof}

\begin{lemma} \label{lem2} Assume $(A_x)$ for all $x\leq a$, and $\vert \mathcal{H}_\S\vert\leq q^{n-1}$. 
A line that contains $2$ holes of $\S$, contains at least $a+3$ holes of $\S$.
\end{lemma}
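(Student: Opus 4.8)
The plan is to reduce to the planar case by intersecting with a suitable plane through the given line and then to invoke Lemma \ref{nieuw} together with property $(A_x)$. Let $\ell$ be a line carrying two holes $H_1,H_2$ of $\S$, and suppose for contradiction that $\ell$ carries at most $a+2$ holes, say exactly $h_0\le a+2$ of them. I would work with the $\theta_{n-2}=(q^{n-1}-1)/(q-1)$ planes through $\ell$. For a plane $\pi$ through $\ell$, no hyperplane of $\S$ can contain $\pi$, since $\pi\supseteq\ell$ contains the holes $H_1,H_2$; hence every hyperplane of $\S$ meets $\pi$ in a line, and the distinct lines so obtained form a set $\S_\pi$ of at most $q+a$ lines of $\pi$ whose holes are exactly the holes of $\S$ lying in $\pi$.

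First I would locate a good plane by averaging. Each hole off $\ell$ lies in exactly one plane through $\ell$, and there are at most $q^{n-1}-2$ such holes (using $|\Ha|\le q^{n-1}$ and the two holes $H_1,H_2$ on $\ell$). Since $(q^{n-1}-2)/\theta_{n-2}<q-1$, some plane $\pi_0$ through $\ell$ contains at most $q-2$ holes off $\ell$, hence at most $(a+2)+(q-2)=q+a$ holes in total. A point-count (a set of $k$ lines in $\PG(2,q)$ covers at most $kq+1$ points, the maximum being attained by a pencil) then forces $|\S_{\pi_0}|\ge q>a+2$.

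Next I would apply Lemma \ref{nieuw} to $\S_{\pi_0}$: as $|\S_{\pi_0}|\le q+a$, it cannot contain three non-collinear holes, so all holes of $\S_{\pi_0}$ are collinear. Since $H_1,H_2\in\ell$ are among them, they all lie on $\ell$, so the number of holes of $\S_{\pi_0}$ equals $h_0\le a+2$. Writing $b_0=|\S_{\pi_0}|-q$, the point-count gives $b_0\ge 0$ and $h_0\le a+2\le |\S_{\pi_0}|=q+b_0$, while $b_0\le a<(q-2)/3$; thus $\S_{\pi_0}$ meets the hypotheses of property $(A_{b_0})$, which is available since $b_0\le a$. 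Property $(A_{b_0})$ then yields at least $q-b_0\ge q-a$ holes of $\S_{\pi_0}$, contradicting $h_0\le a+2$ because $q-a>a+2$ whenever $a<(q-2)/3$. This contradiction shows that $\ell$ carries at least $a+3$ holes.

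I expect the main obstacle to be organising the two reductions so that property $(A_x)$ genuinely applies: one must simultaneously control the number of holes in the chosen plane (so that it does not exceed $|\S_{\pi_0}|$) and ensure $\S_{\pi_0}$ has enough lines for Lemma \ref{nieuw} to be usable, which is exactly what the averaging step secures. The delicate point is that restrictions of distinct hyperplanes may coincide, so $\S_{\pi_0}$ could be a partial $(q+b_0)$-cover with $b_0<a$; keeping track of $b_0$ is what makes the sharp quantitative bound $q-b_0\le|\Ha|$ in $(A_x)$, rather than mere collinearity, do the work.
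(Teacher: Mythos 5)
Your proposal is correct and is essentially the paper's own argument run in contrapositive form: where the paper shows that every plane through the line would have at least $q+a+1$ holes and then sums over all $\theta_{n-2}$ planes to exceed the bound $q^{n-1}$, you use that same bound to average and extract a single plane with at most $q+a$ holes, then obtain the identical planar contradiction from Lemma \ref{nieuw} and property $(A_x)$ (collinearity forces all holes of the plane onto $\ell$, yet $(A_x)$ demands at least $q-a$ of them). Your explicit bookkeeping of $b_0$, handling the fact that distinct hyperplanes may induce the same line of the plane, is a technical point the paper glosses over, but the approach is the same.
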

\begin{proof} Let $L$ be a line with $t$ holes, $t<q-a$, and let $\pi$ be a plane through $L$. Assumption $(A_x)$ shows that if $\pi$ contains at most $q+a$ holes, there are at least $q-a$ holes, which are all collinear, a contradiction. Hence, every plane through $L$ contains at least $q+a+1$ holes, which implies that there are at least 
$$\theta_{n-2}(q+a+1-t)+t$$ holes in $\PG(n,q)$, which has to be at most $q^{n-1}$. If $t=a+2$, $\theta_{n-2}(q+a+1-a-2)+a+2>q^{n-1}$,  a contradiction. Hence, $t$ is at least $a+3$.
\end{proof}

\begin{lemma}\label{lem1} Assume $(A_x)$ for all $x\leq a$, and $\vert \mathcal{H}_\S\vert\leq q^{n-1}$. Every hole of $\S$ lies on more than $q^{n-2}/2$ lines with at least $q-a$ holes.
\end{lemma}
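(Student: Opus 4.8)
The plan is to fix a hole $P$ and analyse the planes of $\PG(n,q)$ through $P$, projecting from $P$ so that lines through $P$ become the points, and planes through $P$ the lines, of a quotient $\PG(n-1,q)$. The first step is a lower bound valid for \emph{every} plane $\pi\ni P$: the $q+a$ hyperplanes of $\S$ meet $\pi$ in at most $q+a$ lines (none can contain $\pi$, since $P$ is uncovered), so they induce a partial cover of $\pi\cong\PG(2,q)$ having $P$ as a hole. If this induced cover has at least $q$ lines, property $(A_x)$ gives at least $q-a$ holes (if it has $\le q+b$ holes we get $\ge q-b\ge q-a$, and otherwise trivially $>q+b\ge q-a$); if it has fewer than $q$ lines, the elementary fact that $m\le q$ lines cover at most $1+mq$ points forces at least $q$ holes. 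Hence every plane through $P$ contains at least $q-a\ (\ge 2)$ holes. In particular, letting $D$ be the set of lines through $P$ that carry at least two (equivalently, by Lemma \ref{lem2}, at least $a+3$) holes, every line of the quotient $\PG(n-1,q)$ meets $D$; that is, $D$ is a blocking set of lines, so $|D|\ge\theta_{n-2}$.

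Next I would isolate the lines I want to count. Call a plane $\pi\ni P$ \emph{tangent} if it contains exactly one line of $D$ and \emph{secant} if it contains at least two. In a tangent plane all of its $\ge q-a$ holes lie on that single line, so after padding the induced cover up to $q+a$ lines through points off the hole line, $(A_x)$ shows that line carries at least $q-a$ holes: it is one of the lines sought. Thus the set $R$ of lines through $P$ with at least $q-a$ holes is precisely the set of essential points of the blocking set $D$, each tangent plane contributes one element of $R$, and each element of $R$ lies in at most $\theta_{n-2}$ planes through $P$; therefore
$$|R|\ \ge\ \frac{\#\{\text{tangent planes through }P\}}{\theta_{n-2}}.$$

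Finally I would bound the secant planes. Writing $N_\pi$ for the number of holes of $\pi$ and using $\sum_{\pi\ni P}(N_\pi-1)=(|\Ha|-1)\,\theta_{n-2}\le (q^{n-1}-1)\,\theta_{n-2}$ together with the per-plane bound $N_\pi\ge q-a$, the total ``excess'' $\sum_\pi\bigl(N_\pi-(q-a)\bigr)$ is small (of order $a\,q^{2n-4}$); since each secant plane has more than $q+a$ holes it consumes a definite share of this excess, which caps the number of secant planes and forces more than half of all planes through $P$ to be tangent. Dividing by $\theta_{n-2}$ and comparing with the exact value of the number of lines of $\PG(n-1,q)$ then yields $|R|>q^{n-2}/2$. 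The hard part is exactly this final estimate: the bare inequality $N_\pi>q+a$ for secant planes is a hair too weak for the count, so it must be sharpened — either by showing that a planar partial cover with non-collinear holes has a few more than $q+a$ of them, or, via Result \ref{hulp} applied to the blocking set $D$, by showing its essential points are plentiful — after which one verifies the exact arithmetic against $\theta_{n-2}$ and the line-count of $\PG(n-1,q)$.
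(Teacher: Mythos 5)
Your opening steps are sound: each plane through the hole $P$ does induce a partial cover with at least $q-a$ holes, your set $D$ is a blocking set with respect to the lines of the quotient $\PG(n-1,q)$, and every tangent plane does yield a line with at least $q-a$ holes. But the final counting step is not merely ``arithmetic to be verified'' --- it genuinely fails, for the reason you half-suspect. Counting over \emph{all} planes through $P$, every hole $Q\neq P$ is counted $\theta_{n-2}$ times, so the double count is $\sum_{\pi\ni P}(N_\pi-1)=(\vert\Ha\vert-1)\,\theta_{n-2}$. Feeding in your two per-plane bounds ($N_\pi-1\geq q-a-1$ for tangent planes, $N_\pi-1\geq q+a$ for secant planes, the latter from Lemma \ref{nieuw}) and $\vert\Ha\vert\leq q^{n-1}$, the number $S$ of secant planes satisfies only $S(2a+1)\lesssim (a+1)\,q^{n-2}\theta_{n-2}$, i.e.\ $S\lesssim\frac{a+1}{2a+1}\,\Pi$, where $\Pi$ is the total number of planes through $P$. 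Since $\frac{a+1}{2a+1}>\frac{1}{2}$ for every $a\geq 0$ (and equals $1$ when $a=0$), you can never conclude that more than half the planes are tangent; after dividing by $\theta_{n-2}$ you get at best roughly $\frac{a}{2a+1}q^{n-2}$ lines, and nothing at all when $a=0$. Nor are your proposed repairs available under the stated hypotheses: sharpening the secant bound to $N_\pi\geq q+a+3$ amounts to invoking property $(A_x)$ for $x\leq a+2$ (pad the induced cover to size $q+a+2$ avoiding three non-collinear holes), which the lemma does not assume; and Result \ref{hulp} is a statement about blocking sets in a plane (Theorem \ref{hulp2} about hyperplane-blocking sets in $\PG(n,q)$), so neither applies to your line-blocking set $D$ of $\PG(n-1,q)$ --- which, moreover, need not have size at most $2q$ and, a priori, could have very few essential points (think of a union of two lines, which has exactly one).

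The missing idea is an anchoring line that makes the count disjoint instead of $\theta_{n-2}$-fold. Since $\vert\Ha\vert\leq q^{n-1}<\theta_{n-1}$, some line $L$ through the hole $R$ contains no hole other than $R$. Now restrict attention to the $\theta_{n-2}$ planes through $L$: two such planes meet exactly in $L$, so their sets of holes other than $R$ are pairwise disjoint, and the count is exact with no division at the end. In each such plane the holes are either collinear --- necessarily on a line through $R$ distinct from $L$, carrying between $q-a-1$ and $q-1$ holes besides $R$, by $(A_x)$ --- or non-collinear, in which case there are at least $q+a$ holes besides $R$ by Lemma \ref{nieuw}. Writing $X$ for the number of planes of the first kind, $X(q-a-1)+(\theta_{n-2}-X)(q+a)+1\leq q^{n-1}$ forces $X>q^{n-2}/2$, and each of these $X$ planes contributes a \emph{distinct} line through $R$ with at least $q-a$ holes (distinct because two planes through $L$ share only $L$). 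This is exactly the paper's proof; its choice of the hole-free line $L$ is precisely what eliminates the factor-$\theta_{n-2}$ loss that sinks your projection argument.
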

\begin{proof} Let $R$ be a hole. There is a line $L$ through $R$ containing only covered points and $R$, otherwise there would be at least $\theta_{n-1}+1$ holes. Using assumption $(A_x)$ and Lemma \ref{nieuw}, we see that a plane through $L$ contains either at most $q-1$ holes on a line through $R$, different from $L$, or it contains at least $q+x$ holes different from $R$.

Suppose that there are $X$ planes through $L$ with at most $q-1$ holes different from $R$. Using assumption $(A_x)$ and Lemma \ref{nieuw}, we see that the number of holes is at least
$$X(q-a-1)+(\theta_{n-2}-X)(q+a)+1,$$
which has to be at most $q^{n-1}$. Putting $X=q^{n-2}/2$ yields a contradiction. Hence, there are more than $q^{n-2}/2$ planes with at most $q$ holes. Again using assumption $(A_x)$, we see that in each of these planes, there is a line through $R$ containing at least $q-a-1$ other holes, and all holes in such a plane lie on this line.
\end{proof}

\begin{theorem}\label{st3} Assume $(A_x)$ for all $x\leq a$, and $\vert \mathcal{H}_\S\vert\leq q^{n-1}$. Then the holes of $\S$ are contained in one hyperplane of $\PG(n,q)$.
\end{theorem}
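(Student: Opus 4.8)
The plan is to reduce the statement to producing a single hyperplane that already contains many holes, and then to absorb all remaining holes into it by exploiting the local richness guaranteed by Lemma \ref{lem2}. Concretely, I would first isolate the following \emph{absorbing step}: if some hyperplane $\Sigma$ of $\PG(n,q)$ contains more than $q^{n-1}/(a+2)$ holes, then every hole of $\S$ lies in $\Sigma$. To see this, suppose not and let $R'$ be a hole off $\Sigma$. For each hole $P\in\Sigma$, the line $R'P$ carries the two holes $R'$ and $P$, so by Lemma \ref{lem2} it carries at least $a+3$ holes; since $R'P$ meets $\Sigma$ only in $P$, at least $a+1$ of these holes lie off $\Sigma$ and differ from $R'$. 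Distinct points $P$ give distinct lines through $R'$, meeting only in $R'$, so these off-$\Sigma$ holes are pairwise distinct. Writing $m^{\ast}=|\Ha\cap\Sigma|>q^{n-1}/(a+2)$, this gives $|\Ha|\geq m^{\ast}+1+m^{\ast}(a+1)=1+m^{\ast}(a+2)>q^{n-1}$, contradicting the hypothesis $|\Ha|\leq q^{n-1}$. This is the step in which the bound on $a$ is used.

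Second, I would produce such a rich hyperplane from Lemma \ref{lem1}. Fix any hole $R$. By Lemma \ref{lem1} it lies on more than $q^{n-2}/2$ lines carrying at least $q-a$ holes each. Since a subspace of dimension $d$ through $R$ carries only $\theta_{d-1}$ lines through $R$, and $\theta_{n-3}=\frac{q^{n-2}-1}{q-1}<q^{n-2}/2$ for $q\geq 3$, these long lines cannot lie in a subspace of dimension at most $n-2$; hence they span a subspace of dimension at least $n-1$. If they span \emph{exactly} a hyperplane $\Sigma$, then all the holes on them lie in $\Sigma$, so $m^{\ast}\geq 1+\tfrac{q^{n-2}}{2}(q-a-1)$, and a short computation shows $(a+2)(q-a-1)>2q$ for $1\leq a<(q-2)/3$ and $q$ large, whence $m^{\ast}>q^{n-1}/(a+2)$ and the absorbing step closes the argument. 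The whole proof runs by induction on $n$, the base case $n=2$ being property $(A_x)$ (equivalently Theorem \ref{st1}), since there a hyperplane is a line and the hypothesis reads $|\Ha|\leq q$.

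The main obstacle is exactly the dimension of this span: I must rule out that the long lines through $R$ span all of $\PG(n,q)$ instead of a hyperplane. Crude counting will not suffice, because if the long lines spread out maximally then, by averaging, no single hyperplane through $R$ captures a positive fraction of them, so the required rigidity must come from the planar structure. Here I would combine Lemma \ref{nieuw} with the induction hypothesis applied to hyperplane sections: by Theorem \ref{st3} in dimension $n-1$, any hyperplane meeting $\Ha$ in at most $q^{n-2}$ points has those holes confined to an $(n-2)$-space, and a long line lying in such a hyperplane forces that $(n-2)$-space to contain it. Propagating this confinement across the hyperplanes through a fixed long line, and using that more than half of the relevant planes are hole-poor (so that their holes are collinear by Lemma \ref{nieuw}), should pin the long lines through $R$ into one common hyperplane. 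Managing this propagation cleanly, and separately handling the boundary value $a=0$ where the factor $a+2$ degenerates, is the delicate part I expect to cost the most work.
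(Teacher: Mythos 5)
Your \emph{absorbing step} is correct, and it is in fact the paper's own closing move (there it appears as the count $(a+2)\cdot\vert\Ha\cap\langle R',\pi_{n-2}\rangle\vert+1>q^{n-1}$, obtained by joining a hole $R''$ outside a hyperplane to all holes inside it). The base case and the appeal to Lemmas \ref{lem1} and \ref{lem2} are also sound. But the proposal has a genuine gap exactly at the point you yourself flag as ``the delicate part'': you never prove that the long lines through a hole $R$ lie in a single hyperplane rather than spanning all of $\PG(n,q)$, and the ``propagation'' you sketch --- confining the holes of hole-poor hyperplanes to $(n-2)$-spaces and then ``pinning'' the long lines together --- is a hope, not an argument. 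It moreover presupposes two things you never establish: that a hyperplane with at most $q^{n-2}$ holes exists at all, and that the induction hypothesis may legitimately be applied to a hyperplane section (the induced family of $(n-2)$-spaces must be checked to be a partial cover of admissible size, which requires a padding argument as in Lemma \ref{nieuw}). Since in your scheme the whole theorem reduces to this unproven spanning claim, the proof is incomplete; there is also the boundary failure at $a=0$ that you concede, where $(a+2)(q-a-1)>2q$ is false.

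It is instructive to see how the paper gets around precisely this obstacle, by reordering the steps. It first constructs a hole-poor hyperplane by an ascending chain: a line $L$ through a hole $R$ with no further holes, then a plane through $L$ with at most $q$ holes (otherwise $\geq\theta_{n-2}q+1$ holes), and inductively a $(d+1)$-space with at most $q^{d}$ holes containing a $d$-space with at most $q^{d-1}$ holes. Inside the resulting hyperplane $\pi$, the dimension induction confines the holes to an $(n-2)$-space $\pi_{n-2}$, and --- this is the ingredient your argument is missing --- Lemma \ref{gev} guarantees that $\pi_{n-2}$ is \emph{rich}: it contains at least $q^{n-2}-aq^{n-3}$ holes. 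With this anchor the spanning problem evaporates: for a hole $R'\notin\pi_{n-2}$, every long line through $R'$ either meets $\pi_{n-2}$, in which case it automatically lies in the one hyperplane $\langle R',\pi_{n-2}\rangle$, or it is skew to $\pi_{n-2}$, which is excluded by counting (join the $\geq q-a$ holes of the skew line to the holes of $\pi_{n-2}$; each joining line carries at least $a+3$ holes, forcing more than $q^{n-1}$ holes). So the rigidity comes not from propagating planar collinearity through many sections, but from having a large $(n-2)$-dimensional target of holes to aim the long lines at; this also treats $a=0$ uniformly. If you want to salvage your proposal, the missing piece is exactly this: produce a rich $(n-2)$-space of holes first, and use it to force all long lines through an outside hole into $\langle R',\pi_{n-2}\rangle$.
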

\begin{proof} For $n=2$, this is assumption $(A_x)$ with $x=a$. Suppose by induction that this theorem holds for any dimension $i\leq n-1$.

First, we show that there is a hyperplane $\pi$ of $\PG(n,q)$ with at most $q^{n-2}$ holes. Let $R$ be a hole. There is a line $L$ through $R$ containing only covered points and $R$. Suppose that all planes through $L$ contain more than $q$ holes, then there would be at least $\theta_{n-2}q+1$ holes, a contradiction. Suppose that there is a $d$-dimensional space $\pi_d$ with at most $q^{d-1}$ holes. Then there is a $(d+1)$-dimensional space containing $\pi_d$ with at most $q^{d}$ holes. Otherwise, the number of holes would be at least $\theta_{n-d-1}(q^d+1-q^{d-1})+q^{d-1}$, a contradiction if $d \leq n-1$. Hence, by induction, there is a hyperplane $\pi$ of $\PG(n,q)$ with at most $q^{n-2}$ holes.

Using the induction hypothesis, all holes in $\pi$ are contained in an $(n-2)$-dimensional space $\pi_{n-2}$ of $\pi$. Moreover, Lemma \ref{gev} shows that the number of holes in $\pi_{n-2}$ is at least $q^{n-2}-aq^{n-3}$.

There are at least $\theta_{n-2}(q-a-1)+1$ holes in $\PG(n,q)$ since every plane through $L$ contains at least $q-a-1$ extra holes. Hence, there is certainly a hole $R'$ that is not contained in $\pi_{n-2}$.

Now we distinguish between two cases.

{\bf Case 1:} All lines through $R'$ with at least $q-a$ holes are lines which intersect $\pi_{n-2}$. Lemma \ref{lem1} shows that there are at least $q^{n-2}/2$ such lines. Since a line through two holes contains at least $a+3$ holes (see Lemma \ref{lem2}), counting the holes in $\langle R',\pi_{n-2}\rangle$ yields that this number  is at least 
$$q^{n-2}(q-a-1)/2+(q^{n-2}-aq^{n-3}-q^{n-2}/2)(a+2)+1.$$

If all holes are contained in  $\langle R',\pi_{n-2}\rangle$, the theorem is proven. Suppose now that not all holes are contained in the hyperplane $\langle R',\pi_{n-2}\rangle$. Let $R''$ be a hole not in $\langle R',\pi_{n-2}\rangle$. Connecting $R''$ with all the holes in  $\langle R',\pi_{n-2}\rangle$ yields at least  $(a+2)(q^{n-2}(q-a-1)/2+(q^{n-2}-aq^{n-3}-q^{n-2}/2)(a+2)+1)+1$ holes, which is more than $q^{n-1}$, a contradiction.

{\bf Case 2: } There is a line through $R'$, skew to $\pi_{n-2}$, with at least $q-a$ holes. This yields at least 
$$(q-a)(q^{n-2}-aq^{n-3})(a+1)+q^{n-2}-aq^{n-3}+q-a>q^{n-1}$$ holes, a contradiction.
\end{proof}

\begin{theorem} \label{algemeen} Assume $(A_x)$ for all $x\leq a$, then the number of holes of $\S$ is at least $q^{n-1}-aq^{n-2}$.
\end{theorem}
\begin{proof} This follows immediately from Theorem \ref{st3} and Lemma \ref{gev}.
\end{proof}

\begin{corollary}\label{priemg2} Assume $(A_x)$ for all $x\leq a$, and $\vert \mathcal{H}_\S\vert\leq q^{n-1}$. If $q$ is a prime, $\S$ consists of $q$ hyperplanes through a common $(n-2)$-space $\pi$ and $a$ other hyperplanes, not through $\pi$.
 \end{corollary}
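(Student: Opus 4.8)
The plan is to mirror the proof of Corollary~\ref{priemg}, replacing the planar ingredients by their higher-dimensional counterparts. First I would apply Theorem~\ref{st3}, whose hypotheses $(A_x)$ for all $x\leq a$ and $|\Ha|\leq q^{n-1}$ are exactly those assumed here, to conclude that all holes of $\S$ lie in a single hyperplane $H$ of $\PG(n,q)$. Then the $q+a$ hyperplanes of $\S$ together with $H$ form a cover $\C$ of $\PG(n,q)$ of size $q+a+1$, and by the role played by $H$ in the proof of Lemma~\ref{gev} this hyperplane is essential to $\C$.

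Next I would pass to the dual. Dualizing $\C$ yields a blocking set $B$ of $\PG(n,q)$ of size $q+a+1$, in which the dual point of $H$ is essential. Two size estimates drive the argument. Since $a<(q-2)/3<q-1$, we have $|B|=q+a+1<2q$, so Result~\ref{unique2} guarantees that $B$ is uniquely reducible to a minimal blocking set $B_{0}$. On the other hand, $q$ being prime with a non-empty range of admissible $a$ forces $q$ to be odd, so $p=q$ is an odd prime; and from $a<(q-2)/3<(q+1)/2$ we get $|B_{0}|\leq|B|=q+a+1<3(q+1)/2$. Result~\ref{blokhuis} then forces $B_{0}$ to be trivial, i.e.\ to contain a line. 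Since a line already meets every hyperplane of $\PG(n,q)$ and $B_{0}$ is minimal, $B_{0}$ is exactly a line of $q+1$ points.

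Dualizing back, the unique minimal cover $\C_{0}$ contained in $\C$ is the pencil of all $q+1$ hyperplanes through a common $(n-2)$-space $\pi$, and $H$ is one of these since it is essential. Removing $H$ leaves $q$ hyperplanes of $\S$, all passing through $\pi$. The remaining $a$ hyperplanes of $\S$ are precisely those of $\C\setminus\C_{0}$; I would finish by noting that every hyperplane through $\pi$ already occurs in the pencil $\C_{0}$, so none of these $a$ hyperplanes can pass through $\pi$. This yields the asserted decomposition of $\S$ into $q$ hyperplanes through $\pi$ and $a$ hyperplanes avoiding $\pi$.

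The one point demanding care — and the only genuine obstacle — is checking that the two numerical inequalities $q+a+1<2q$ and $q+a+1<3(q+1)/2$ hold throughout the admissible range $a<(q-2)/3$, together with the observation that primality of $q$ (with a non-vacuous hypothesis) makes $p$ odd so that Result~\ref{blokhuis} applies. Once these are verified, the structural conclusion follows directly from unique reducibility and Blokhuis's bound, exactly as in the planar case of Corollary~\ref{priemg}.
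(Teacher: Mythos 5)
Your proposal is correct and follows essentially the same route as the paper's proof: apply Theorem~\ref{st3} to confine the holes to one hyperplane, adjoin that hyperplane to $\S$ to obtain a cover of size $q+a+1<3(q+1)/2$, and combine the unique reducibility of Result~\ref{unique2} with the Bruen--Blokhuis bound of Result~\ref{blokhuis} to identify the minimal cover as the pencil of hyperplanes through an $(n-2)$-space $\pi$, forcing the stated structure. Your write-up merely makes explicit the dualization, the essentiality of the added hyperplane, and the numerical checks that the paper leaves implicit.
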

 \begin{proof}
 It follows from Theorem \ref{st3} that the holes are contained in one hyperplane, say $\mu$. Then the hyperplanes of $\S$, together with $\mu$, constitute a cover $\C$ of size $q+a+1<3(q+1)/2$. Result \ref{blokhuis}, together with Result \ref{unique2}, shows that the unique minimal cover contained in $\C$ is the set of all hyperplanes through an $(n-2)$-space $\pi$. Since this set covers $\PG(n,q)$ entirely, the hyperplane $\mu$ is one of the hyperplanes through $\pi$. The other $a$ hyperplanes are random, but do not contain $\pi$.
 \end{proof}

As remarked before, assumption $(A_x)$ holds for all partial $(q+a)$-covers of $\PG(n,q)$, $x\leq (q-10)/4$, $q>13$. 

However, a recent result of Blokhuis, Brouwer and Sz\H{o}nyi \cite{BBS} shows that assumption $(A_x)$ is valid for all $x$, where $x<(q-2)/3$. Moreover, the following example shows that the upper bound $a<(q-2)/3$ is sharp.

\begin{example} Let  $a=(q-2)/3$ and let $\S$ be a set of $q-1$ lines $L_i$ through a point $P$, and $a+1$ other lines through a fixed point, lying on one of the lines $L_i$. Then there are $2(q-a-1)=q+a$ holes, lying on two lines.
\end{example}

Combining Theorems \ref{st3}, \ref{algemeen} and Corollary \ref{priemg2} with the result of Brouwer, Blokhuis and Sz\H{o}nyi, yields the following theorem.
\begin{theorem} If $\S$ is a partial $(q+a)$-cover of $\PG(n,q)$, $a<(q-2)/3$, with at most $q^{n-1}$ holes, then there are at least $q^{n-1}-aq^{n-2}$ holes and the holes are contained in one hyperplane. If $q$ is a prime, $\S$ consists of $q$ hyperplanes through a common $(n-2)$-space $\pi$ and $a$ other hyperplanes, not through $\pi$.

\end{theorem}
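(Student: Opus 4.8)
The plan is to treat this final statement as a pure assembly of the conditional results of Section~3, once the planar hypothesis has been upgraded. The decisive observation is that every statement from Theorem~\ref{st3} onward was proved \emph{under the assumption $(A_x)$ for all $x\leq a$}, and nowhere used the sharper numerical range $x\leq(q-10)/4$ coming from Theorem~\ref{st1}; that range entered only in verifying $(A_x)$ in the base dimension $n=2$. Consequently the entire higher-dimensional argument goes through verbatim the moment $(A_x)$ is known on the wider interval $x<(q-2)/3$.

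First I would invoke the result of Blokhuis, Brouwer and Sz\H{o}nyi \cite{BBS} to record that $(A_x)$ holds for every $x<(q-2)/3$: a partial $(q+b)$-cover of $\PG(2,q)$ with $b<(q-2)/3$ and at most $q+b$ holes has between $q-b$ and $q$ holes, all collinear. Since the hypothesis $a<(q-2)/3$ forces $(A_x)$ for all $x\leq a$, the assumption shared by Theorem~\ref{st3}, Theorem~\ref{algemeen} and Corollary~\ref{priemg2} is met, and I may quote each of them freely.

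Next, with $\vert\Ha\vert\leq q^{n-1}$ in hand, Theorem~\ref{st3} applies directly and confines all holes of $\S$ to a single hyperplane $\mu$ of $\PG(n,q)$. Feeding this into Lemma~\ref{gev} (equivalently, citing Theorem~\ref{algemeen}) yields the lower bound $\vert\Ha\vert\geq q^{n-1}-aq^{n-2}$. Finally, when $q$ is prime, Corollary~\ref{priemg2} pins down the cover exactly: dualizing $\S\cup\{\mu\}$ produces a cover of size $q+a+1<3(q+1)/2$, whose unique minimal subcover---by Result~\ref{blokhuis} together with Result~\ref{unique2}---is the pencil of all hyperplanes through an $(n-2)$-space $\pi$, while the remaining $a$ hyperplanes avoid $\pi$. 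This is precisely the asserted structure.

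The only genuine content beyond this bookkeeping is external: the crux is trusting that \cite{BBS} really certifies $(A_x)$ throughout the range $x<(q-2)/3$, since this is exactly what stretches the planar classification past the self-contained interval of Theorem~\ref{st1}, and hence the main obstacle lives in the cited planar statement rather than in the inductive dimension-raising. I would also remark, though the theorem itself does not claim it, that the Example producing $q+a$ holes on two lines for $a=(q-2)/3$ shows the interval for $a$ cannot be widened.
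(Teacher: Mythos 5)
Your proposal is correct and follows essentially the same route as the paper, whose entire proof is the one-line assembly "combine Theorems \ref{st3}, \ref{algemeen} and Corollary \ref{priemg2} with the result of Blokhuis, Brouwer and Sz\H{o}nyi \cite{BBS}," exactly as you do. Your key observation---that the numerical range $x\leq (q-10)/4$ entered only in verifying the planar base case $(A_x)$, so that the higher-dimensional machinery applies verbatim once \cite{BBS} certifies $(A_x)$ for all $x<(q-2)/3$---is precisely the structural design the paper intends.
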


{\bf Acknowledgements:} This research was supported by the Project {\em Combined algorithmic and theoretical study of combinatorial structures} between the Fund for Scientific Research Flanders-Belgium (FWO-Flanders) and the Bulgarian Academy of Sciences. The third author wants to thank the people at the Bulgarian Academy of Sciences in Veliko Tarnovo and Sofia for their hospitality during her research visit in the framework of this project.

The authors thank the referee and A.E. Brouwer for their valuable comments.

Address of the authors: \\
\vspace{5pt}
\\
Stefan Dodunekov:\\
IMI / MFI\\
8, Acad. G. Bonchev\\
1113 Sofia (Bulgaria) \\
stedo@moi.math.bas.bg\\
\\
 Leo Storme, Geertrui Van de Voorde:\\
Department of pure mathematics and computer algebra\\
Ghent University\\
Krijgslaan 281-S22\\
9000 Ghent  (Belgium)\\
$\lbrace$ls,gvdvoorde$\rbrace$@cage.ugent.be\\
http://cage.ugent.be/ $\sim$ $\lbrace$ls,gvdvoorde$\rbrace$\\
\\

\end{document}